\theoremstyle{plain}
\newtheorem{theorem}                 {Theorem}      [section]
\newtheorem{proposition}  [theorem]  {Proposition}
\newtheorem{lemma}        [theorem]  {Lemma}
\newtheorem*{theorem*}                 {Theorem} 
\theoremstyle{definition}
\newtheorem{remark}       [theorem]  {Remark}
\def \imto{\hookrightarrow}
\DeclareMathOperator{\R}{R}
\DeclareMathOperator{\trace}{trace}
\DeclareMathOperator{\Div}{div}
\def \ln{\mbox{${\overline{\nabla}}$}}
\def \hr{\mbox{${\mathbb{H}^2\times \mathbb{R}}$}}
\def \r{\mbox{${\mathbb R}$}}
\def \h{\mbox{${\mathbb H}$}}
\def \E{\mbox{${\mathbb E}$}}
\def \s{\mbox{${\mathbb S}$}}
\def \g{\mbox{$g_{\kappa,\tau}$}}
\def \N{\mbox{$\mathcal{N}$}}
\def \Nkt{\mbox{$\mathcal{N}_{\kappa,\tau}$}}
\def \M{\mbox{$\mathcal{M}$}}
\def \gt{\mbox{${\widetilde{g}}$}}
\DeclareMathOperator{\grad}{grad}
\DeclareMathOperator{\Ricci}{Ricci}
\begin{document}

\title{Biconservative surfaces in BCV-spaces}

 \author{Stefano Montaldo}
 
 \address{Universit\`a degli Studi di Cagliari\\
 Dipartimento di Matematica e Informatica\\
 Via Ospedale 72\\
 09124 Cagliari}
 \email{montaldo@unica.it}
 
\author{Irene I. Onnis}
\address{Departamento de Matem\'{a}tica, C.P. 668\\ ICMC,
USP, 13560-970, S\~{a}o Carlos, SP\\ Brasil}
\email{onnis@icmc.usp.br}

\author{Apoena Passos Passamani}
\address{Departamento de Matem\'{a}tica, UFES, 29075-910, Vit\'{o}ria, ES, Brasil}
	\email{apoenapp@gmail.com.br}

\subjclass[2000]{53C30, 58E20}
\keywords{Biconservative surfaces, Bianchi-Cartan-Vranceanu spaces, homogeneous spaces, constant angle surfaces, constant mean curvature surfaces.}
 \thanks{
 The work was supported by Regione Autonoma della Sardegna, Visiting Professor Program. The second author was also supported by grant 2015/00692-5, S\~ao Paulo Research Foundation (Fapesp).  The third author was supported by Capes--Brazil}

\begin{abstract}
	{\it Biconservative hypersurfaces} are hypersurfaces with conservative  {\it stress-energy}  tensor with respect to the bienergy functional, and form a geometrically interesting family which includes that of biharmonic hypersurfaces. In this paper we study biconservative surfaces in the $3$-dimensional Bianchi-Cartan-Vranceanu spaces, obtaining their characterization in the following cases: when they form a constant angle with the Hopf vector field; when they are $\mathrm{SO}(2)$-invariant.
\end{abstract}

\maketitle

\section{Introduction}
A hypersurface $\M^{n-1}$ in an $n$-dimensional Riemannian manifold $\N^n$ is called {\it biconservative} if
\begin{equation*}
2 A(\grad f)+ f \grad f=2 f \Ricci(N)^{\top}\,,
\end{equation*}
where $A$ is the shape operator, $f=\trace A$ is the mean curvature function and $\Ricci(N)^{\top}$ is the tangent  component of the Ricci curvature of $\N$ in the direction of the unit normal $N$ of $\M$ in $\N$.\\

The notion of biconservative hypersurfaces was introduced in \cite{CMOP2}, as we shall detail in the next section, where the authors classify, locally, biconservative surfaces into $3$-dimensional space forms (see also \cite{Fuannali}). In \cite{MOR} there is a detailed qualitative study of $SO(p+1)\times SO(q+1)$-invariant  proper biconservative hypersurfaces in the Euclidean space $\r^n$ ($n=p+q+2$). The spacelike and timelike biconservative hypersurfaces of the three-dimensional Minkowski space were studied in \cite{YuFu}, where  the author  gave the local parametrization of the biconservative surfaces that do not have constant mean curvature. Also, in \cite{FOP}, the authors present the classification of the non minimal biconservative surfaces, with parallel mean vector field, in the product spaces $\s^n\times \r$ and $\h^n\times \r$. In \cite{MOR2} there is a motivating study of the relationship between biconservative surfaces and the holomorphicity of a generalized Hopf function. Moreover, the authors give a complete classification of constant mean curvature biconservative surfaces in $4$-dimensional space forms.
\\

In this paper we restrict our study to biconservative surfaces  in the  Bianchi-Cartan-Vranceanu spaces (BCV-spaces). The latter can be thought as a  local representation of simply connected homogeneous three-dimensional Riemannian manifolds and they can be explicitly described by the following two-parameter family of Riemannian metrics:
\begin{equation*}
g_{\kappa,\tau} =\frac{dx^{2} + dy^{2}}{F^{2}} +  \left(dz +
\tau\, \frac{ydx - xdy}{F}\right)^{2},\quad F(x,y)=1+\dfrac{\kappa}{4}(x^2+y^2),\quad \kappa, \tau\in\r,
\end{equation*}
defined on 
 $$\N=\{(x,y,z)\in\r^3\colon F(x,y)> 0\}.$$
 
An important feature of the BCV-spaces is that they admit a Riemannian submersion over a surface with constant Gaussian curvature $\kappa$, called {\it Hopf fibration}:
$$\psi:\N_{\kappa,\tau}= (\N, g_{\kappa,\tau})\to M^2(\kappa)=(\r^2, h=(dx^2+dy^2)/F^2), \quad \psi(x,y,z)=(x,y).$$ 
The vector field $E_3=\partial/\partial z$, which is tangent to the fibers of the Hopf fibration, is called the {\it Hopf vector field}. 
In \cite{Da}, Daniel considered the angle $\alpha$ between the normal vector field of an  immersed surface in $\N_{\kappa,\tau}$ and the Hopf vector field $E_3$, obtaining the expressions of the Gauss and Codazzi equations in terms of the function $\nu=\cos\alpha$. Moreover, he showed that this angle is a fundamental invariant for a surface in a BCV-space.\\

Since the biconservative surfaces in a $3$-dimensional space form have been classified in \cite{CMOP2}, in this paper we study the case when the BCV-space $\Nkt$ is not a space form, that is when $\kappa\neq 4\tau^2$.\\

Using the techniques developed by Daniel, in the first part of the paper we study
biconservative {\it helix surfaces} (or {\it constant angle surfaces}) in the BCV-space $\Nkt$, that is 
surfaces such that the angle $\alpha\in [0,\pi]$ between its unit normal vector field and the unit Killing vector field $E_3$  is constant at every point of the surface. For this class of surfaces we prove the following theorem.\bigskip

{\bf Theorem~\ref{teoACHopf}}. {\it
	Let $\M$ be a non minimal biconservative surface in a BCV-space $\Nkt$, with $\kappa\neq 4\tau^2$. Then, the following statements are equivalent:
	\begin{itemize}
		\item[(a)] $\M$ is a constant angle surface;
		\item[(b)] $\M$ is a CMC surface;
		\item[(c)] $\M$ is a Hopf tube over a curve with constant geodesic curvature.
	\end{itemize}}\bigskip

Since the rotation about the $z$-axes in $\Nkt$ is an isometry for all values of $\kappa$ and $\tau$,  a natural class of surfaces is given by those which are invariant under the action of $SO(2)$ given by rotation about the $z$-axes. These surfaces are called {\it surfaces of revolution}.  The second part of the paper is devoted to the characterization of biconservative surfaces of revolution in $\Nkt$, with $\tau\neq 0$, obtaining the following theorem.\bigskip

{\bf Theorem~\ref{teo:surf-rev-tau-not-zero}}. {\it
Let $\M$ be a surface of revolution in a BCV-space  $\Nkt$, that is not a space form and with $\tau\neq0$. Assume that $f\neq 0$ at every point on $\M$ and $\alpha\in(0,\pi)$. Then, $\M$ is a  biconservative surface if and only if it is a Hopf circular cylinder.   
}\bigskip

\section{Biharmonic maps, stress-energy tensors and biconservative immersions}\label{sec:stress-energy-tensor}

As described by Hilbert in~\cite{H}, the {\it stress-energy}
tensor associated to a variational problem is a symmetric
$2$-covariant tensor $S$ which is conservative at critical points,
i.e. with $\Div S=0$.

In the context of harmonic maps $\varphi:(\M,g)\to (\N,h)$ between two Riemannian manifolds,
that is critical points of the {\em energy} functional
\begin{equation}\label{energia}
E(\varphi)=\frac{1}{2}\int_{\M}\,|d\varphi|^2\,dv_g \,\, ,
\end{equation}
the stress-energy tensor was studied in detail by
Baird and Eells in~\cite{BE} (see also \cite{Sa} and \cite{BR}). Indeed, the Euler-Lagrange
equation associated to the energy functional \eqref{energia} is equivalent to the vanishing of the tension
field $\tau(\varphi)=\trace\nabla d\varphi$ (see \cite{ES}), and the tensor
$$
S=\frac{1}{2}\vert d\varphi\vert^2 g - \varphi^{\ast}h
$$
satisfies $\Div S=-\langle\tau(\varphi),d\varphi\rangle$. Therefore, $\Div S=0$ when the map is harmonic.

\begin{remark}\label{remark:conservative}
We point out that, in the case of isometric immersions, the condition $\Div S=0$ is always satisfied,
 since $\tau(\varphi)$ is normal.
\end{remark}

A natural generalization of harmonic maps are the so-called {\it biharmonic maps}: these maps are the critical points of the bienergy functional (as suggested by Eells--Lemaire \cite{EL83})
\begin{equation}\label{bienergia}
    E_2(\varphi)=\frac{1}{2}\int_{\M}\,|\tau (\varphi)|^2\,dv_g \,\, .
\end{equation}
In \cite{Y2} G.~Jiang showed that the Euler-Lagrange equation associated to $E_2(\varphi)$ is given by the vanishing  of the bitension field
\begin{equation}\label{bitensionfield}
 \tau_2(\varphi) = - \Delta \tau(\varphi)- \trace R^{\tiny{\N}}(d \varphi, \tau(\varphi)) d \varphi  \,\, ,
\end{equation}
where $\Delta$ is the rough Laplacian on sections of $\varphi^{-1} \, (T\N)$ that, for a local orthonormal frame $\{e_i\}_{i=1}^m$ on $\M$, is defined by
$$
    \Delta=-\sum_{i=1}^m\{\nabla^{\varphi}_{e_i}
    \nabla^{\varphi}_{e_i}-\nabla^{\varphi}_
    {\nabla^{\tiny{\M}}_{e_i}e_i}\}\,\,.
$$
The curvature operator on $(\N,h)$, which also appears in \eqref{bitensionfield}, can be computed by means of
$$
    R^{\tiny{\N}} (X,Y)= \nabla_X \nabla_Y - \nabla_Y \nabla_X -\nabla_{[X,Y]} \,\, .
$$

The study of the stress-energy tensor for the
bienergy was initiated  in \cite{Y3} and afterwards developed in \cite{LMO}. Its expression is \begin{eqnarray}\label{eq:stress-bienergy-tensor}
S_2(X,Y)&=&\frac{1}{2}\vert\tau(\varphi)\vert^2\langle X,Y\rangle+
\langle d\varphi,\nabla\tau(\varphi)\rangle \langle X,Y\rangle \\
\nonumber && -\langle d\varphi(X), \nabla_Y\tau(\varphi)\rangle-\langle
d\varphi(Y), \nabla_X\tau(\varphi)\rangle,
\end{eqnarray}
and it satisfies the condition
\begin{equation}\label{eq:2-stress-condition}
\Div S_2=-\langle\tau_2(\varphi),d\varphi\rangle,
\end{equation}
thus conforming to the principle of a  stress-energy tensor for the
bienergy.\\

If  $\varphi:(\M,g)\imto (\N,h)$ is an isometric immersion, then \eqref{eq:2-stress-condition} becomes
\begin{equation}\label{eq:2-stress-condition-tangent}
(\Div S_2)^{\#}=- \tau_2(\varphi)^{\top}\,,
\end{equation}
where $\#$ denotes the musical isomorphism sharp.\\

We say that an isometric immersion is {\em biconservative} if the corresponding 
stress-energy tensor $S_2$ is conservative, i.e. $\Div S_2=0$.

Thus, from  \eqref{eq:2-stress-condition-tangent}, biconservative isometric immersions correspond to immersions with vanishing tangential part of the corresponding bitension field. \\

The decomposition of the bitension field for  hypersurfaces is given in the following theorem (see, for example, \cite{Chen,Ou}). 
\begin{theorem}\label{DecomposicaoBitensao}
	Let $ \varphi:\M^{n-1}\imto \N^n$ be an isometric immersion with unit normal vector field $N$ and mean curvature vector field $H=(f/(n-1))N$. Then, the normal and tangential components of $\tau_2(\varphi)$ are respectively
	\begin{equation*}
		\Delta f +f |A|^2-f \Ricci(N,N)=0
	\end{equation*}
	and
	\begin{equation*}
		2 A (\grad{f})+ f \grad{f}- 2f \Ricci (N)^{\top}=0,
	\end{equation*}
	where $A$ is the shape operator and $\Ricci (N)^{\top}$ is the tangent component of the Ricci curvature of $\N$ in the direction of  the vector field $N$.
\end{theorem}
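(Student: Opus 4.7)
The plan is a direct computation from the bitension formula \eqref{bitensionfield}: for an isometric immersion $\varphi:\M^{n-1}\imto\N^n$ with unit normal $N$ the tension field is simply $\tau(\varphi)=fN$, so I need only expand $-\Delta(fN)-\trace R^\N(d\varphi,fN)d\varphi$ and read off the components along $N$ and along $T\M$. I would work at an arbitrary point $p\in\M$ in a local orthonormal frame $\{e_i\}_{i=1}^{n-1}$ which is geodesic at $p$, so that $\nabla^\M_{e_i}e_i$ vanishes at $p$.

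The computation has three building blocks. First, the Weingarten formula $\nabla^\N_X N=-AX$ gives $\nabla^\varphi_{e_i}(fN)=e_i(f)\,N-fAe_i$; iterating and using the Gauss formula $\nabla^\N_X Y=\nabla^\M_X Y+\langle AX,Y\rangle N$ on $\nabla^\N_{e_i}(Ae_i)$ yields, at $p$,
\begin{equation*}
-\Delta(fN) \;=\; \bigl(-\Delta f - f|A|^2\bigr)N \;-\; 2A(\grad f) \;-\; f\sum_i(\nabla^\M_{e_i}A)(e_i).
\end{equation*}
Second, using the symmetry of the tensor $\nabla_W A$ together with the Codazzi equation $(\nabla_X A)Y-(\nabla_Y A)X=-\bigl(R^\N(X,Y)N\bigr)^{\top}$, a standard contraction gives the key identity
\begin{equation*}
\sum_i(\nabla^\M_{e_i}A)(e_i) \;=\; \grad f - \Ricci^\N(N)^{\top},
\end{equation*}
which reduces to the familiar $\sum(\nabla_{e_i}A)(e_i)=\grad f$ in a space form and which is the source of the $\Ricci(N)^{\top}$ term in the tangential equation. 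Third, from the symmetries of the curvature tensor one gets $\sum_{i=1}^{n-1}R^\N(e_i,N)e_i=-\Ricci^\N(N)$, so the curvature piece of the bitension contributes
\begin{equation*}
-\trace R^\N(d\varphi,fN)d\varphi \;=\; f\,\Ricci^\N(N) \;=\; f\,\Ricci^\N(N)^{\top}+f\,\Ricci^\N(N,N)\,N.
\end{equation*}

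Summing the two contributions, the component of $\tau_2(\varphi)$ along $N$ becomes $\bigl(-\Delta f - f|A|^2 + f\,\Ricci^\N(N,N)\bigr)N$ and the tangential component becomes $-2A(\grad f)-f\grad f + 2f\,\Ricci^\N(N)^{\top}$. Setting each to zero and rearranging signs produces the two equations in the statement. I do not see any genuine conceptual obstacle; the main source of error is bookkeeping of signs, since the Laplacian convention, the Codazzi identity, the contracted Bianchi identity $\sum R^\N(e_i,N)e_i=-\Ricci^\N(N)$, and the definition of $\Ricci(N)$ as a vector versus a $(0,2)$-tensor each admit two standard choices in the literature. I would verify each auxiliary identity on a small model (a surface in $\r^3$ or $S^3$, where $\Ricci(N)^{\top}=0$) before assembling the final formulas.
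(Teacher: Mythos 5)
Your computation is correct: with the paper's conventions ($R^{\N}(X,Y)=\nabla_X\nabla_Y-\nabla_Y\nabla_X-\nabla_{[X,Y]}$ and the positive rough Laplacian), the expansion of $-\Delta(fN)$ via Gauss--Weingarten, the contracted Codazzi identity $\sum_i(\nabla_{e_i}A)(e_i)=\grad f-\Ricci^{\N}(N)^{\top}$, and the trace identity $\sum_i R^{\N}(e_i,N)e_i=-\Ricci^{\N}(N)$ assemble exactly to the stated normal and tangential components (up to the harmless overall sign, since the theorem records the components set equal to zero). The paper itself offers no proof, quoting the result from Chen and Ou, and your argument is precisely the standard computation given in those references, so there is nothing to add beyond the sign bookkeeping you already flagged.
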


By Theorem~\ref{DecomposicaoBitensao} an  isometric immersion $\varphi: \M^{n-1} \imto \N^n$ is  {\it biconservative}  if $\varphi$ satisfies the condition 
\begin{equation}\label{bi-conservative}
		2 A (\grad{f})+ f \grad{f}- 2f \Ricci (N)^{\top}=0.
\end{equation}
The hypersurface $\M^{n-1}$ immersed in this way is called a {\it biconservative hypersurface}.

\section{Bianchi-Cartan-Vranceanu spaces}\label{BCV-espacos}

A Riemannian manifold $(\M,g)$ is said to be homogeneous if for every two points $p$ and $q$ in $\M$, there exists an isometry of $\M$, mapping $p$ into $q$. The classification of simply connected $3$-dimensional homogeneous spaces is well-known and can be summarized as follows. The dimension of the isometry group must be equal $6$, $4$ or $3$. If the isometry group is of dimension $6$, $\M$ is a complete real space form, i.e. the Euclidean space $\E^3$, a sphere $\s^3(k)$, or a hyperbolic space $\h^3(k)$. If the dimension of the isometry group is $4$, $\M$ is isometric to $\mathrm{SU}(2)$, the special unitary group, to $\widetilde{\mathrm{SL}(2, R)}$, the universal covering of the real special linear group, to $\mathrm{Nil}_3$, the Heisenberg group, all with a certain left-invariant metric, or to a Riemannian product $\s^2(k) \times \r$ or $\h^2(k) \times \r$. Finally, if the dimension of the isometry group is $3$, $\M$ is isometric to a general simply connected Lie group with left-invariant metric.

\`E.~Cartan classified all $3$-dimensional spaces with $4$-dimensional isometry group in \cite{C}. In particular, he proved that they are all homogeneous and obtained the following two-parameter family of spaces, which are now known as the Bianchi-Cartan-Vranceanu spaces, or BCV-spaces for short. For  $\kappa, \tau\in\r$, we define $\N_{\kappa,\tau}$
as the following open subset of $\r^3$:
 $$
 \N=\{(x,y,z)\in\r^3\;:\; 1+ \frac{\kappa}{4}(x^2+y^2)> 0\},
 $$
equipped with the metric
\begin{equation}\label{metrica}
g_{\kappa,\tau} =\frac{dx^{2} + dy^{2}}{F^{2}} +  \left(dz +
\tau\, \frac{ydx - xdy}{F}\right)^{2},
\end{equation}
where $F=1+\dfrac{\kappa}{4}(x^2+y^2)$. 

The metrics $g_{\kappa,\tau}$, called {\it Bianchi-Cartan-Vranceanu metrics}, can be found for the first time in the classification of the $3$-dimensional homogeneous Riemannian manifolds  given by L.~Bianchi in~1928~(see \cite{B}) and, latter, appeared as \eqref{metrica} in \cite{C} and \cite{V}, thanks to \'{E}.~Cartan and G.~Vranceanu, respectively.

The family \eqref{metrica} of metrics includes all the $3$-dimensional homogeneous metrics whose isometry group has dimension $4$ or $6$, except for the hyperbolic space, according to the following scheme:
\begin{itemize}
\item if $\kappa=\tau=0$, then $\N_{\kappa,\tau}\cong \E^3$;
\item if $\kappa=4\tau^2\neq 0$, then $\N_{\kappa,\tau}\cong \s^3\left(\frac{\kappa}{4}\right)\setminus\{\infty\}$;
\item if $\kappa >0$ and $\tau=0$, then $\N_{\kappa,\tau}\cong (\s^2(\kappa)\setminus\{\infty\})\times\r$;
\item if $\kappa <0$ and $\tau=0$, then $\N_{\kappa,\tau}\cong \h^2(\kappa)\times\R$;
\item if $\kappa >0$ and $\tau\neq 0$, then $\N_{\kappa,\tau}\cong\mathrm{SU}(2)\setminus\{\infty\}$;
\item if $\kappa <0$ and $\tau\neq 0$, then $\N_{\kappa,\tau}\cong\widetilde{\mathrm{SL}(2, R)}$;
\item if $\kappa =0$ and $\tau\neq 0$, then $\N_{\kappa,\tau}\cong \mathrm{Nil}_3$.
\end{itemize}

With respect to the  globally defined orthonormal frame
\begin{equation}\label{eq-basis}
E_1= F\frac{\partial}{\partial x}-\tau y \frac{\partial}{\partial z},\quad
E_2=F\frac{\partial}{\partial y}+\tau x \frac{\partial}{\partial z},\quad
E_3=\frac{\partial}{\partial z},
\end{equation}
the non zero components of the  Ricci curvature are
\begin{equation}\label{Ricci}
 \Ricci(E_1,E_1)=\Ricci(E_2,E_2)=\kappa-2\tau^2,\qquad \Ricci\big(E_3,E_3)=2\tau^2.
\end{equation}

As we have mentioned in the introduction an important feature of the BCV-spaces is that they admit a Riemannian submersion over a surface with constant Gaussian curvature $\kappa$, called the {\it Hopf fibration}:
$$\psi: \N_{\kappa,\tau} \to M^2(\kappa)=\Bigg(\r^2, h=\frac{dx^2+dy^2}{F^2}\Bigg), \quad \psi(x,y,z)=(x,y).$$ 
The vector field $E_3$, which  is tangent to the fibers of this fibration, is called the {\it Hopf vector field}. \\ 

Let now $\M$ be an oriented, simply connected surface in $\Nkt$.
Denote by $\nabla$ the Levi-Civita connection of $\M$, by $N$ its unit normal vector field, by $A$ the shape operator associate to $N$ and by $K$ the Gaussian curvature of the surface. We put
$$g_{\kappa,\tau} (E_3,N)=\cos\alpha,$$
where $\alpha:\M\to [0,\pi]$ is the angle function between the unit normal vector field and the Hopf vector field.
Then, projecting $E_3$ on the tangent plane of  $\M$ we get
\begin{equation}\label{eq:e3-t-alpha-n}
E_3=T+\cos \alpha\, N,
\end{equation}
where $T$ is the tangent part of $E_3$ and satisfies $g_{\kappa,\tau}(T,T)=\sin^2 \alpha$.

The importance of the angle $\alpha$ was emphasized by B.~Daniel in \cite{Da}, where he showed that the expressions of the Gauss and Codazzi equations can be written in terms of the function $\cos\alpha$, as illustrated in the following proposition.

\begin{proposition}[\cite{Da}]\label{GaussCodazzi} 
The Gauss and Codazzi equations for a surface immersed in a BCV-space $\Nkt$
are, respectively,

\begin{equation}\label{gauss1}
	K= \det A+\tau^2+(\kappa-4\tau^2)\cos^2 \alpha,
\end{equation}
	
\begin{equation}\label{codazzi1}
	\nabla_X AY -\nabla_Y AX-A[X,Y]= (\kappa-4\tau^2)\cos \alpha\, ( g_{\kappa,\tau} (Y,T)X
	- g_{\kappa,\tau} (X,T)Y),
\end{equation}
where $X$, $Y$ are tangent vector fields of $\M$.	

	Moreover, for any vector field $X$ tangent to $\M\subset \Nkt$, it holds 
	\begin{equation}\label{compatibilidade} 
	\nabla_X T=\cos \alpha\,(AX-\tau JX),
	\qquad
	g_{\kappa,\tau} (AX-\tau JX,T)=-X(\cos\alpha),
	\end{equation}
	where $JY:=N\wedge Y$, for every $Y \in T\M$.
\end{proposition}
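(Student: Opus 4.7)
This proposition is the Daniel-type structure-equation analysis of an immersed surface applied to the BCV geometry, so the plan is to reduce everything to two explicit ambient ingredients computed in the orthonormal frame (\ref{eq-basis}): the Levi-Civita connection $\overline{\nabla}$ of $\Nkt$ and its Riemann tensor. First I would compute the Lie brackets, obtaining $[E_1,E_3]=[E_2,E_3]=0$ and $[E_1,E_2]=-(\kappa/2)y\,E_1+(\kappa/2)x\,E_2+2\tau\,E_3$, and then apply the Koszul formula to read off the $\overline{\nabla}_{E_i}E_j$. Two consequences drive the remainder of the proof: $E_3$ is a unit Killing field with
\begin{equation*}
\overline{\nabla}_X E_3 = \tau\,(X\wedge E_3)
\end{equation*}
for every tangent vector $X$ (checked directly on the frame, since $\overline{\nabla}_{E_1}E_3=-\tau E_2$, $\overline{\nabla}_{E_2}E_3=\tau E_1$ and $\overline{\nabla}_{E_3}E_3=0$); and the Riemann curvature tensor of $\Nkt$ decomposes as
\begin{equation*}
\overline{R}(X,Y)Z = (\kappa-3\tau^2)\bigl[g_{\kappa,\tau}(Y,Z)X-g_{\kappa,\tau}(X,Z)Y\bigr] + (\kappa-4\tau^2)\,Q(X,Y)Z,
\end{equation*}
where $Q(X,Y)Z$ is the familiar correction built from the inner products of $X,Y,Z$ with $E_3$, and its coefficients are fixed by matching the Ricci data (\ref{Ricci}).

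For (\ref{gauss1}) I would apply the classical identity $K=\det A+\overline{K}(X,Y)$ for an orthonormal tangent frame $\{X,Y\}$ of $\M$. The constant-sectional-curvature piece contributes $\kappa-3\tau^2$, while the correction $Q$ contributes $-(\kappa-4\tau^2)\bigl[g_{\kappa,\tau}(X,E_3)^2+g_{\kappa,\tau}(Y,E_3)^2\bigr]$. Since $E_3=T+\cos\alpha\,N$ and $g_{\kappa,\tau}(T,T)=\sin^2\alpha$, the bracket equals $|T|^2=\sin^2\alpha$, and the identity $\tau^2+(\kappa-4\tau^2)\cos^2\alpha=(\kappa-3\tau^2)-(\kappa-4\tau^2)\sin^2\alpha$ gives (\ref{gauss1}).

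For (\ref{codazzi1}) I would start from the standard relation $\overline{R}(X,Y)N=-\bigl[\nabla_X(AY)-\nabla_Y(AX)-A[X,Y]\bigr]$, which follows from the Weingarten formula $\overline{\nabla}_X N=-AX$ together with the Gauss splitting. The constant-curvature part of $\overline{R}$ contributes nothing because $N\perp X,Y$, so only the $Q$-correction survives; using $g_{\kappa,\tau}(N,E_3)=\cos\alpha$ and $g_{\kappa,\tau}(X,E_3)=g_{\kappa,\tau}(X,T)$ for $X$ tangent, a short computation collapses the survivor to $(\kappa-4\tau^2)\cos\alpha\bigl[g_{\kappa,\tau}(Y,T)X-g_{\kappa,\tau}(X,T)Y\bigr]$, which is exactly the stated right-hand side.

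Finally, the compatibility formulas (\ref{compatibilidade}) come from differentiating the decomposition $E_3=T+\cos\alpha\,N$ along a tangent vector $X$ using the Gauss and Weingarten formulas, and equating the result with $\overline{\nabla}_X E_3=\tau(X\wedge E_3)=\tau(X\wedge T)-\tau\cos\alpha\,JX$. Since $X$ and $T$ are tangent and the tangent plane is two-dimensional, $X\wedge T$ is normal to $\M$, so matching tangential components immediately gives $\nabla_X T=\cos\alpha(AX-\tau JX)$. Taking $g_{\kappa,\tau}(\,\cdot\,,N)$ of the Killing identity, or equivalently expanding $X(\cos\alpha)=X\bigl(g_{\kappa,\tau}(E_3,N)\bigr)$ via $\overline{\nabla}_X E_3$ and $\overline{\nabla}_X N=-AX$, produces the second equation after using the skew-symmetry of $\overline{\nabla}E_3$. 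The main obstacle is pinning down the curvature decomposition above cleanly enough that the split into a constant-curvature part plus an $E_3$-correction isolates precisely those terms that survive under the projections onto $T\M$ and $N$; once the decomposition is in place the three identities all reduce to routine tangent/normal bookkeeping.
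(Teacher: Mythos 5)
Your proposal is correct and follows essentially the same route as the source: the paper states this proposition without proof, citing Daniel's work, and your argument (explicit connection and curvature of $\Nkt$ in the frame \eqref{eq-basis}, the Killing identity $\overline{\nabla}_XE_3=\tau\,X\wedge E_3$, the curvature decomposition into a constant-curvature part plus an $E_3$-correction, then the Gauss, Codazzi, and compatibility identities by tangent/normal projection) is precisely the standard derivation in that reference. All the checkable details (the brackets, $\overline{K}=(\kappa-3\tau^2)-(\kappa-4\tau^2)\sin^2\alpha$, and the tangential/normal split of $\overline{\nabla}_XE_3$) come out right.
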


The set $\{T,JT\}$ defines an orthogonal basis of $T\M$ and, for convenience, we consider its orthonormalization:  
\begin{equation}\label{basee_i}
e_1=\frac{T}{\sin\alpha}, \qquad e_2=\frac{JT}{\sin\alpha}, \quad \alpha\in(0,\pi).
\end{equation}

With respect to the frame $\{e_1,e_2\}$, taking into account \eqref{compatibilidade}, and supposing that  $\alpha \in (0,\pi)$, the matrix of the shape operator is
\begin{equation}\label{A1}
A=\begin{pmatrix}
e_1(\alpha) & e_2(\alpha)-\tau \\
\ \ & \ \  \\
e_2(\alpha)-\tau & \lambda \\
\end{pmatrix}\, ,
\end{equation}
while the Levi-Civita connection becomes
\begin{equation}\label{nabla-ei}
\begin{aligned}
&\nabla_{e_1} e_1=\cot\alpha\, (e_2(\alpha)-2\tau)\, e_2 ,\qquad
\nabla_{e_2} e_1=\lambda \cot\alpha \, e_2,\\
&\nabla_{e_1} e_2=-\cot\alpha\, (e_2(\alpha)-2\tau)\, e_1,
\qquad \nabla_{e_2} e_2=-\lambda \cot\alpha \, e_1.
\end{aligned}
\end{equation}

Using  \eqref{A1} and \eqref{nabla-ei}, by a direct computation, we can rewrite the Gauss and Codazzi equations \eqref{gauss1} and \eqref{codazzi1} as follows.

\begin{proposition}
	Let $\M$ be a surface in a BCV-space $\Nkt$ 
	 such that $\alpha \in (0,\pi)$. Then, the Gauss equation \eqref{gauss1} and the Codazzi equation \eqref{codazzi1}  are equivalent to the following equations
\begin{equation}\label{codazzi2}
\left\{
\begin{aligned}
& e_1(e_2(\alpha))+\lambda\cot\alpha\, e_2(\alpha)+\cot\alpha\, e_1(\alpha)(e_2(\alpha)-2\tau)-e_2(e_1(\alpha))=0, \\ \\
& 
\cot\alpha\, [2(e_2(\alpha))^2-\lambda \,e_1(\alpha)-6\tau\, e_2(\alpha)+4\tau^2+\lambda^2]+e_1(\lambda)\\
&-e_2(e_2(\alpha))-(4\tau^2-\kappa)\cos\alpha\sin\alpha=0.		
		\end{aligned}
		\right.
	\end{equation}
	
\end{proposition}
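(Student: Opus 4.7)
The plan is to reduce everything to one direct computation: evaluate the Codazzi identity \eqref{codazzi1} on the pair $(X,Y) = (e_1, e_2)$ (the only non-trivial choice, by antisymmetry), expand every covariant derivative via \eqref{A1} and \eqref{nabla-ei}, and project the resulting vector equation onto $e_1$ and $e_2$. Since $T = \sin\alpha \, e_1$, the right-hand side of \eqref{codazzi1} collapses to $(4\tau^2 - \kappa)\cos\alpha\sin\alpha\, e_2$, so the two scalar equations of \eqref{codazzi2} will emerge as the $e_1$- and $e_2$-components, respectively.

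Concretely, from \eqref{A1} I read off $Ae_1 = e_1(\alpha)\, e_1 + (e_2(\alpha) - \tau)\, e_2$ and $Ae_2 = (e_2(\alpha) - \tau)\, e_1 + \lambda\, e_2$; from \eqref{nabla-ei} I compute the Lie bracket $[e_1, e_2] = -\cot\alpha\,(e_2(\alpha) - 2\tau)\, e_1 - \lambda \cot\alpha\, e_2$. Expanding $\nabla_{e_1}(Ae_2) - \nabla_{e_2}(Ae_1) - A[e_1, e_2]$ by the Leibniz rule together with \eqref{nabla-ei}, the $e_1$-component produces exactly the first equation of \eqref{codazzi2}, while the $e_2$-component, after grouping the coefficient of $\cot\alpha$ (using the expansion of $(e_2(\alpha) - \tau)(e_2(\alpha) - 2\tau)$ and combining it with the $\lambda^2$ and $\lambda e_1(\alpha)$ terms), yields the second equation.

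The one place where something subtle could happen is the Gauss equation \eqref{gauss1}, which a priori is an independent scalar condition. I would compute the intrinsic Gaussian curvature $K$ directly from \eqref{nabla-ei} via $K = g_{\kappa,\tau}(R(e_1,e_2)e_2, e_1)$ and equate it with $\det A + \tau^2 + (\kappa - 4\tau^2)\cos^2\alpha$. Using the second equation of \eqref{codazzi2} to substitute for $\cot\alpha \bigl(e_1(\lambda) - e_2(e_2(\alpha))\bigr)$, and then applying the identity $\csc^2\alpha - \cot^2\alpha = 1$, all remaining terms rearrange into the manifestly vanishing combination $\bigl(\cot^2\alpha - \csc^2\alpha + 1\bigr)\bigl(\lambda e_1(\alpha) - e_2(\alpha)^2 + 2\tau e_2(\alpha)\bigr)$. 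Thus Gauss is automatically implied by \eqref{codazzi2}, and the main obstacle in the whole argument is not conceptual but purely the meticulous bookkeeping: the many cross-products of $\lambda$, $\cot\alpha$, $e_1(\alpha)$, and $e_2(\alpha)$ must be tracked carefully when projecting the vector equation onto the frame $\{e_1, e_2\}$.
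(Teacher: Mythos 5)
Your proposal is correct and follows essentially the same route as the paper, which simply asserts the result ``by a direct computation'' from \eqref{A1} and \eqref{nabla-ei}: evaluating \eqref{codazzi1} on $(e_1,e_2)$ does give the two equations of \eqref{codazzi2} as the $e_1$- and $e_2$-components, and the Gauss equation is indeed redundant, since writing $K$ intrinsically via \eqref{nabla-ei} shows that $K-\bigl(\det A+\tau^2+(\kappa-4\tau^2)\cos^2\alpha\bigr)$ equals $-\cot\alpha$ times the second equation of \eqref{codazzi2}, exactly the cancellation you describe.
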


\section{Biconservative helix surfaces in BCV-spaces}\label{ACCMC}

Let $\M$ be an oriented, simply connected surface in $\Nkt$ and let $\{e_1,e_2,N\}$ be the adapted frame of $\Nkt$ along $\M$, where $e_1$ and $e_2$ are described in \eqref{basee_i}. We first decompose \eqref{bi-conservative}, which ensures that $\M$ is biconservative, with respect to the  frame $\{e_1,e_2,N\}$. 
\begin{proposition}\label{pro:biconservative-general}
	Let $\M$ be a surface in a BCV-space $\Nkt$, such that $\alpha \in (0,\pi)$. Then, $\M$ is biconservative if and only if
	\begin{equation}\label{bi-conservative2}
	\left\{
	\begin{aligned}
	& e_1(\lambda+e_1(\alpha))\,(\lambda+3e_1(\alpha))+2e_2(\lambda+e_1(\alpha))\,(e_2(\alpha)-\tau)\\
	&-2(4\tau^2-\kappa)(\lambda+e_1(\alpha))\cos\alpha\sin\alpha=0, \\ \\
	& 2e_1(\lambda+e_1(\alpha))\,(e_2(\alpha)-\tau)+(3\lambda+e_1(\alpha))\,e_2(\lambda+e_1(\alpha))=0.
	\end{aligned}
	\right.
	\end{equation}
\end{proposition}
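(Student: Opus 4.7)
The plan is to directly expand the biconservative equation \eqref{bi-conservative} in the adapted orthonormal frame $\{e_1,e_2\}$ on $\M$, using the explicit expression \eqref{A1} for the shape operator. From that matrix one immediately reads off the mean curvature function $f=\trace A=\lambda+e_1(\alpha)$, so $\grad f=e_1(f)\,e_1+e_2(f)\,e_2$ and
\begin{equation*}
A(\grad f)=\bigl[e_1(f)\,e_1(\alpha)+e_2(f)\,(e_2(\alpha)-\tau)\bigr]e_1+\bigl[e_1(f)\,(e_2(\alpha)-\tau)+e_2(f)\,\lambda\bigr]e_2.
\end{equation*}
Multiplying by $2$ and adding $f\grad f$ collects, in the coefficient of $e_1$, the factor $2e_1(\alpha)+f=\lambda+3e_1(\alpha)$, which already explains the appearance of $\lambda+3e_1(\alpha)$ in the first equation of \eqref{bi-conservative2}; similarly the coefficient of $e_2$ yields the factor $2\lambda+f=3\lambda+e_1(\alpha)$ matching the second equation.

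Next I would compute the tangential Ricci term. Since $\Ricci$ acts on the orthonormal frame $\{E_1,E_2,E_3\}$ of $\Nkt$ by \eqref{Ricci} with eigenvalues $\kappa-2\tau^2$ on the horizontal plane $\mathrm{span}(E_1,E_2)$ and $2\tau^2$ on $E_3$, a one-line computation using the decomposition $N=(N-\cos\alpha\,E_3)+\cos\alpha\,E_3$ gives
\begin{equation*}
\Ricci(N)=(\kappa-2\tau^2)\,N+(4\tau^2-\kappa)\cos\alpha\,E_3.
\end{equation*}
Projecting onto $T\M$ via \eqref{eq:e3-t-alpha-n} and using $T=\sin\alpha\,e_1$ from \eqref{basee_i} yields
\begin{equation*}
\Ricci(N)^{\top}=(4\tau^2-\kappa)\cos\alpha\sin\alpha\,e_1.
\end{equation*}
This is the key geometric input: the Ricci-tangential contribution lives entirely along $e_1$, which is exactly why the curvature term $-2(4\tau^2-\kappa)(\lambda+e_1(\alpha))\cos\alpha\sin\alpha$ appears only in the first equation of \eqref{bi-conservative2} and not in the second.

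With the three ingredients in hand, I would substitute into \eqref{bi-conservative} and split the resulting vector equation into its $e_1$ and $e_2$ components. Replacing $e_1(f)$ and $e_2(f)$ by $e_1(\lambda+e_1(\alpha))$ and $e_2(\lambda+e_1(\alpha))$ and factoring produces exactly the system \eqref{bi-conservative2}. No step requires invoking the Levi--Civita formulas \eqref{nabla-ei} or the Codazzi system \eqref{codazzi2}: the proposition is purely the algebraic translation of \eqref{bi-conservative} into the frame adapted to the angle function. The main bookkeeping obstacle is simply keeping track of signs and of the identification $f=\lambda+e_1(\alpha)$ when collecting the factors $2e_1(\alpha)+f$ and $2\lambda+f$; once these are handled, the equivalence follows.
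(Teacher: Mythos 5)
Your proposal is correct and follows essentially the same route as the paper: read off $f=\lambda+e_1(\alpha)$ from \eqref{A1}, compute $\grad f$ and $A(\grad f)$ in the frame $\{e_1,e_2\}$, establish $\Ricci(N)^{\top}=(4\tau^2-\kappa)\cos\alpha\sin\alpha\,e_1$, and substitute into \eqref{bi-conservative}. The only (cosmetic) difference is that you obtain the Ricci term via the eigen-decomposition of the Ricci operator on $\mathrm{span}(E_1,E_2)\oplus\mathbb{R}E_3$ rather than, as in the paper, expanding $e_1,e_2,N$ in the frame $\{E_i\}$ componentwise; both give the same \eqref{RicciT}.
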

\begin{proof}
	From
	$$\Ricci (N)^{\top}=\Ricci(N,e_1)\,e_1+\Ricci(N,e_2)\,e_2,$$
	putting $e_1= \sum a_i E_i$, $e_2= \sum b_i E_i$, $N=\sum c_i E_i$ and using \eqref{Ricci}, we obtain 
	\begin{equation*}
	\begin{aligned}
	\Ricci(N)^{\top}=& [(\kappa-2\tau^2)\,(c_1 a_1+c_2 a_2)+2\tau^2c_3a_3]\, e_1 \\
	&+[(\kappa-2\tau^2)\,(c_1 b_1+c_2 b_2)+2\tau^2c_3b_3]\, e_2\\
	= &(4\tau^2-\kappa) \,c_3(a_3 \, e_1 + b_3\, e_2).
	\end{aligned}
	\end{equation*}
Using \eqref{eq:e3-t-alpha-n} in \eqref{basee_i}, we have that $a_3=\sin \alpha$, $b_3=0$ and $c_3=\cos\alpha$, so
	\begin{equation}\label{RicciT}
	\Ricci(N)^{\top}=(4\tau^2-\kappa)\cos\alpha \sin\alpha\, e_1.
	\end{equation}
	Next, from \eqref{A1}, the mean curvature function is
	\begin{equation*}\label{f}
	f=\lambda+e_1(\alpha),
	\end{equation*}
	so
	\begin{equation}\label{grad}
	\grad f= \big[e_1(\lambda)+e_1(e_1(\alpha))\big]\, e_1+ \big[e_2(\lambda)+e_2(e_1(\alpha))\big]\, e_2.
	\end{equation}
	Considering again \eqref{A1},  we obtain
	\begin{equation}\label{Agradf}
	\begin{aligned}
	A(\grad f)= &\big[\big(e_1(\lambda)+e_1(e_1(\alpha))\big)e_1(\alpha)+\big(e_2(\lambda)+e_2(e_1(\alpha))\big)\big(e_2(\alpha)-\tau\big)\big] e_1\\ +&\big[\big(e_1(\lambda)+e_1(e_1(\alpha))\big)\big(e_2(\alpha)-\tau\big)+\big(e_2(\lambda)+e_2(e_1(\alpha))\big)\lambda\big]\, e_2.
	\end{aligned}
	\end{equation}
	The result follows substituting \eqref{RicciT},  \eqref{grad} and \eqref{Agradf} in \eqref{bi-conservative}.
\end{proof}

	We say that a surface in a BCV-space $\Nkt$ is a {\it helix surface}, or a {\it constant angle surface}, if the angle $\alpha\in [0,\pi]$ between its unit normal vector field and the unit Killing vector field $E_3$ (tangent to the fibers of the Hopf fibration) is constant at every point of the surface.\\

We have the following characterization of biconservative  helix surfaces.
\begin{proposition}\label{prop33}
	Let $\M$ be a constant angle surface of a BCV-space $\Nkt$, with angle $\alpha \in [0,\pi]$. If  $\M$ is biconservative, then it has constant mean curvature.    
\end{proposition}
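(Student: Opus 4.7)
The plan is to use the constant-angle hypothesis to collapse the biconservative system \eqref{bi-conservative2} and the Codazzi equation \eqref{codazzi2} into a purely algebraic constraint on $\lambda$. Since $\alpha$ is constant, $e_1(\alpha)=e_2(\alpha)=0$, so the mean curvature is $f=\lambda+e_1(\alpha)=\lambda$, and proving CMC reduces to proving that $\lambda$ is constant. I assume $\alpha\in(0,\pi)$ so that the frame $\{e_1,e_2\}$ of \eqref{basee_i} is well defined; the extreme values $\alpha\in\{0,\pi\}$ describe surfaces tangent to the Hopf horizontal distribution, which is integrable only when $\tau=0$, and in that case these surfaces are totally geodesic horizontal slices of $M^2(\kappa)\times\mathbb{R}$ with $f\equiv 0$.

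Substituting $e_1(\alpha)=e_2(\alpha)=0$ into \eqref{bi-conservative2} reduces it to the linear system
\[
\lambda\,e_1(\lambda)-2\tau\,e_2(\lambda)=2C\lambda,\qquad -2\tau\,e_1(\lambda)+3\lambda\,e_2(\lambda)=0,
\]
with $C:=(4\tau^2-\kappa)\cos\alpha\sin\alpha$ a constant, while the second Codazzi equation in \eqref{codazzi2} becomes
\[
e_1(\lambda)=C-\cot\alpha\,(4\tau^2+\lambda^2).
\]
On the open set where $\lambda\neq 0$ and the determinant $3\lambda^2-4\tau^2$ of the system is nonzero, one solves uniquely for $e_1(\lambda)=6C\lambda^2/(3\lambda^2-4\tau^2)$; equating with the Codazzi expression and clearing the denominator yields a polynomial identity of degree four in $\lambda$ whose coefficients depend only on the constants $\kappa,\tau,\alpha$. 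Unless this polynomial vanishes identically, $\lambda^2$ is confined to a finite set, and by continuity on the connected surface $\M$ this forces $\lambda$ to be constant.

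The polynomial vanishes identically precisely when $\cot\alpha=0$, which simultaneously forces $C=0$, i.e.\ when $\alpha=\pi/2$. In that subcase Codazzi gives $e_1(\lambda)=0$ directly, and the second biconservative equation reduces to $3\lambda\,e_2(\lambda)=0$, so $e_2(\lambda)=0$ wherever $\lambda\neq 0$. The remaining exceptional loci $\lambda\equiv 0$ (minimal) and $3\lambda^2\equiv 4\tau^2$ (where $\lambda=\pm 2|\tau|/\sqrt{3}$ is already constant) are handled by inspection. The main obstacle is the algebraic synthesis: Codazzi alone yields only a first-order ODE for $\lambda$ along $e_1$, and biconservativity alone relates $e_1(\lambda)$ and $e_2(\lambda)$ without eliminating $\lambda$; combining the two is precisely what converts a system of differential constraints into the algebraic one that produces the conclusion.
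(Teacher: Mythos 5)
Your proposal is correct and follows essentially the same route as the paper: reduce the shape operator under the constant-angle hypothesis, combine the reduced biconservative system with the reduced Codazzi equation to obtain a quartic in $\lambda$ with constant coefficients, conclude $\lambda$ is constant by connectedness, and treat $\alpha=\pi/2$ and $\alpha\in\{0,\pi\}$ separately. The only cosmetic difference is your use of Cramer's rule on the set where $3\lambda^2-4\tau^2\neq 0$ and $\lambda\neq 0$ (the paper eliminates $e_2(\lambda)$ without dividing, so the quartic holds everywhere), but since every point of $\M$ then has $\lambda$ in a fixed finite set, your continuity argument still closes the case analysis.
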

\begin{proof}
	Firstly, we consider the case $\alpha \in (0,\pi)$, $\alpha\neq \pi/2$. Since $\alpha$ is constant, the  matrix \eqref{A1} of the shape operator,  with respect to the frame $\{e_1,e_2,N\}$, becomes
	\begin{equation*}\label{ACte}
	A=\begin{pmatrix}
	0 & -\tau \\
	
	-\tau & \lambda \\
	\end{pmatrix}\, ,
	\end{equation*}
	consequently $$f=\lambda.$$
	The Codazzi equations \eqref{codazzi2} reduce to the only equation
	\begin{equation}\label{conditionlambda}
	e_1(\lambda) +\lambda^2 \cot\alpha+\kappa \cos\alpha\,\sin\alpha+
	4\tau^2\cot\alpha \,\cos^2\alpha=0,
	\end{equation}
	and the biconservative condition \eqref{bi-conservative2} to 
	\begin{equation}\label{bi-conservativeCte}
	\left\{
	\begin{aligned}
	& \lambda\, e_1(\lambda)-2\tau \,e_2(\lambda)-2 \lambda\, (4\tau^2-\kappa)\,\cos\alpha\sin\alpha=0, \\
	& 3\lambda \,e_2(\lambda)=2\tau \,e_1(\lambda).
	\end{aligned}
	\right.
	\end{equation}
	Now, replacing the second equation of \eqref{bi-conservativeCte} in the first and taking into account \eqref{conditionlambda}, we obtain a polynomial equation in $\lambda$ with constant coefficients:
	\begin{equation*}
	6\cot\alpha \, \lambda^4+ [3\sin2\alpha \,(8\tau^2-\kappa)+8\tau^2\cot\alpha\,
	(3\cos^2\alpha-1)]\,\lambda^2 
	-8\tau^2\cos\alpha\,(\kappa\sin\alpha+
	4\tau^2\cot\alpha\cos\alpha)=0.
	\end{equation*}
	It follows that $\lambda$ must be constant, and so $f$. \\
	In the case $\alpha=\pi/2$, \eqref{bi-conservative2} becomes
	\begin{equation*}\label{bi-conservative90}
	\left\{
	\begin{aligned}
	& \lambda \,e_1(\lambda)=2\tau\, e_2(\lambda), \\
	& 3\lambda \,e_2(\lambda)=2\tau \,e_1(\lambda),
	\end{aligned}
	\right.
	\end{equation*}
	which implies that $\lambda$ is constant.\\
	Finally, in the case $\alpha=0,\pi$, we have that  $E_1$ and $E_2$ must be tangents to the surface. Thereby, the distribution determined by $\{E_1,E_2\}$ is integrable and, from the Frobenius Theorem, it must be involutive. Therefore, $\tau=0$ and the surface is one of the following: $\mathbb{H}^2 \subset \hr$, $\mathbb{S}^2 \subset \mathbb{S}^2\times \mathbb{R}$ or $\mathbb{R}^2 \subset \mathbb{R}^3$, and all of them are minimal.
\end{proof}
Since from Proposition~\ref{prop33} biconservative constant angle surfaces are CMC, in the next proposition we describe the CMC biconservative surfaces in $\Nkt$.
\begin{proposition}\label{pro:biconservatice-cmc}
	Let $\M$ be a  non minimal biconservative surface in a BCV-space~$\Nkt$, with $\kappa\neq 4\tau^2$ . If $\M$ is a CMC surface, then it must be a Hopf tube over a curve with constant geodesic curvature. 
\end{proposition}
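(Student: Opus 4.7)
The plan is to exploit the CMC assumption to kill the gradient terms in the biconservative equation \eqref{bi-conservative}. Since $f$ is constant, $\grad f = 0$ and \eqref{bi-conservative} collapses to $f\,\Ricci(N)^{\top} = 0$. As $\M$ is non-minimal, $f$ is a nonzero constant, so $\Ricci(N)^{\top}\equiv 0$ on $\M$. Plugging in the formula \eqref{RicciT} and using the hypothesis $\kappa\neq 4\tau^2$, this forces $\cos\alpha\,\sin\alpha \equiv 0$ pointwise on $\M$.

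By continuity of $\alpha$ and connectedness of $\M$, the angle function is identically equal to one of the three isolated values $0$, $\pi/2$, $\pi$. The extremal cases $\alpha\equiv 0$ and $\alpha\equiv \pi$ correspond to $E_3$ being normal to $\M$ everywhere; by the Frobenius argument at the end of the proof of Proposition~\ref{prop33} (the distribution spanned by $\{E_1,E_2\}$ must then be involutive), they force $\tau=0$ and $\M$ to be one of the totally geodesic horizontal slices $\r^2\subset\r^3$, $\s^2\subset\s^2\times\r$ or $\h^2\subset\hr$, all of which are minimal. This contradicts our hypothesis, so we must have $\alpha\equiv \pi/2$.

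With $\alpha = \pi/2$, \eqref{eq:e3-t-alpha-n} gives $E_3 = T$, so the Hopf vector field is everywhere tangent to $\M$ and each Hopf fibre meeting $\M$ is entirely contained in $\M$. Hence $\M$ is saturated by fibres of the Hopf fibration $\psi:\Nkt\to M^2(\kappa)$, which is exactly to say that $\M = \psi^{-1}(\gamma)$ for a regular curve $\gamma$ in $M^2(\kappa)$; that is, $\M$ is a Hopf tube.

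It remains to identify $f$ with the geodesic curvature of $\gamma$. Under the adapted frame \eqref{basee_i} at $\alpha=\pi/2$ we have $e_1=E_3$ vertical and $e_2$ horizontal with $\psi_\ast(e_2)$ a unit tangent to $\gamma$, and \eqref{A1} reduces to the shape operator whose trace is $f=\lambda$. Inserting $\alpha=\pi/2$ into the Codazzi system \eqref{codazzi2} yields $e_1(\lambda)=0$, so $\lambda$ is constant along each Hopf fibre and descends to a function on $\gamma$; a short computation with the O'Neill relations for the submersion $\psi$ then identifies this descended function with the geodesic curvature $\kappa_g$ of $\gamma$. The CMC hypothesis therefore translates into $\kappa_g$ being constant, proving the claim. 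The main obstacle is the last identification between $\lambda$ and $\kappa_g$, but once $\alpha\equiv \pi/2$ is established this is standard bookkeeping with O'Neill's formulas for the horizontal direction of $\psi$.
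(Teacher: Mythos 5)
Your argument is correct and follows essentially the same route as the paper: the CMC hypothesis reduces \eqref{bi-conservative} to $f\,\Ricci(N)^{\top}=0$, then \eqref{RicciT} together with $\kappa\neq 4\tau^2$ and non-minimality forces $\cos\alpha=0$, so $E_3$ is tangent and $\M$ is a Hopf tube whose projected curve has geodesic curvature $\kappa_g=f=\cst$. The only differences are cosmetic: the paper carries out the final identification directly via the Riemannian submersion, $\kappa_g=h(\widehat{\nabla}_{\bf t}{\bf t},{\bf n})=\g(\ln_{e_2}e_2,N)=f$, where you invoke O'Neill's formulas, and your Codazzi remark $e_1(\lambda)=0$ is redundant since $f=\lambda$ is already constant.
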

\begin{proof}
	If $\M$ is a CMC surface, the biconservative condition \eqref{bi-conservative} becomes
	\begin{equation*}\label{bi-conservativeCMC}
	f\Ricci(N)^T=0.
	\end{equation*}
Therefore, since $f\neq 0$, taking into account \eqref{RicciT}, we obtain the condition
\begin{equation}\label{eq:riccit0}
(4\tau^2-\kappa)\cos\alpha \sin\alpha=0.
\end{equation}
Thus, from \eqref{eq:riccit0},  we have one of the following possibilities:
	\begin{itemize}
		\item $4\tau^2-\kappa=0$, i.e. the BCV-space is a space form;
		\item $\sin\alpha=0$, and  the surface is minimal (see the proof of Proposition~\ref{prop33});
		\item $\cos\alpha=0$.
	\end{itemize}
By the hypothesis only  the third case can occur and we analyze it by using ideas given in \cite{Ou,OuZ}. As  $\cos\alpha=0$, it results that  $e_1=E_3$, which means that the surface is tangent to the Hopf vector field (that is, it is a Hopf tube). It turns out that it can be parametrized  by $\phi=\phi(u,v)$ such that the $u$-curves are the integral curves of $e_1$ and the $v$-curves are orthogonal to the $u$-curves, that is they are horizontal curves with respect to the Hopf submersion
	$$\psi: \Nkt\to M^2(\kappa):=\Bigg(\r^2, h=\frac{dx^2+dy^2}{F^2}\Bigg), \quad \psi(x,y,z)=(x,y).$$
Let 
$$\begin{aligned}\beta:&\,I\longrightarrow \N_{\kappa,\tau}\\&s\longmapsto \beta(s)\end{aligned},$$
be a $v$-curve parametrized by arc-length and 
let $\alpha(s)=\psi(\beta(s))$
be the projected curve on $M^2(\kappa)$. 
To end the proof we have to prove that  the geodesic curvature of $\alpha$ is constant. We consider the Frenet frame along $\alpha$ into $M^2(\kappa)$ given by $\{{\bf t}=\alpha', {\bf n}\}$, and the Frenet formulas:
	$$\begin{aligned}&\widehat{\nabla}_{\bf t} {\bf t}=\kappa_g\, {\bf n},\\ &\widehat{\nabla}_{\bf t}{\bf n}=-\kappa_g \, {\bf t},\end{aligned}$$
	where $\widehat{\nabla}$ denotes the connection of $M^2(\kappa)$ and $\kappa_g$ the geodesic curvature of $\alpha$ in $M^2(\kappa)$. Since $\psi$ is a Riemannian submersion, 
	 $$\alpha'=d\psi (e_2), \qquad {\bf n}=d\psi(N),$$ 
	 and $e_2$ and $N$ are horizontal vector fields, it follows that
	 $$\kappa_g=h(\widehat{\nabla}_{\bf t} {\bf t}, {\bf n})=\g(\ln_{e_2}e_2,N)=f.$$
	 As $f$ is constant, we conclude that $\kappa_g$ is constant.
\end{proof}

We can summarize the results of this section in the following theorem.

\begin{theorem}\label{teoACHopf}
	Let $\M$ be a non minimal biconservative surface in a BCV-space $\Nkt$, with $\kappa\neq 4\tau^2$. Then,  the following  statements are equivalent:
	\begin{itemize}
		\item[(a)] $\M$ is a constant angle surface;
		\item[(b)] $\M$ is a CMC surface;
		\item[(c)] $\M$ is a Hopf tube over a curve with constant geodesic curvature.
	\end{itemize}	
\end{theorem}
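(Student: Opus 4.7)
The plan is to close the cycle of implications $(a)\Rightarrow(b)\Rightarrow(c)\Rightarrow(a)$ by recycling the two propositions already established in this section and adding one easy geometric observation to complete the loop.

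First, the implication $(a)\Rightarrow(b)$ is nothing but Proposition~\ref{prop33}: assuming $\M$ is a non minimal biconservative surface and that the angle $\alpha$ is constant, the analysis of the biconservative system \eqref{bi-conservative2} combined with the Codazzi equation \eqref{conditionlambda} (and the separate bookkeeping for the cases $\alpha=\pi/2$ and $\alpha\in\{0,\pi\}$) forces the mean curvature $f=\lambda$ to be constant. So I would simply invoke Proposition~\ref{prop33}.

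Next, the implication $(b)\Rightarrow(c)$ is Proposition~\ref{pro:biconservatice-cmc}: assuming $\M$ is non minimal, biconservative and CMC with $\kappa\neq 4\tau^2$, the biconservative condition reduces to $f\,\Ricci(N)^{\top}=0$, which by \eqref{RicciT} gives $(4\tau^2-\kappa)\cos\alpha\sin\alpha=0$; since $\kappa\neq 4\tau^2$ and the surface is not minimal, the only surviving possibility is $\cos\alpha=0$, and then one checks that $\M$ is a Hopf tube whose projection under the Hopf fibration $\psi$ has constant geodesic curvature $\kappa_g=f$.

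Finally, the only implication left to verify is $(c)\Rightarrow(a)$, and this is essentially tautological: if $\M$ is a Hopf tube, then by definition the Hopf vector field $E_3$ is everywhere tangent to $\M$, hence the unit normal $N$ is everywhere orthogonal to $E_3$, so
\[
\cos\alpha=g_{\kappa,\tau}(E_3,N)\equiv 0,
\]
meaning $\alpha\equiv\pi/2$ is constant and $\M$ is a constant angle surface. I do not expect any obstacle here; the only point worth flagging is that the hypothesis $\kappa\neq 4\tau^2$ and non-minimality are used only in the step $(b)\Rightarrow(c)$, to discard the space-form case and the $\sin\alpha=0$ alternative in \eqref{eq:riccit0}, so the statement of the theorem is sharp with respect to these assumptions.
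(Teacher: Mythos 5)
Your proposal is correct and follows essentially the same route as the paper, which presents the theorem as a summary of Proposition~\ref{prop33} (giving $(a)\Rightarrow(b)$) and Proposition~\ref{pro:biconservatice-cmc} (giving $(b)\Rightarrow(c)$). The only addition is that you make explicit the closing implication $(c)\Rightarrow(a)$ — a Hopf tube has $E_3$ tangent, hence $\cos\alpha\equiv 0$ — which the paper leaves implicit, and your observation is correct.
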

\begin{remark}
We point out that in the case $\Nkt$ is a space form,
\eqref{bi-conservative} reduces to
$$
2 A (\grad{f})+ f \grad{f}=0
$$
so that all CMC surfaces are biconservative.
\end{remark}

\section{Biconservative surfaces of revolution in BCV-spaces}\label{biconserverot}
The vector field $X= y\, \partial/\partial x- x\,  \partial/\partial y$ is a Killing vector field of 
the BCV-space $\Nkt$  for all values of $\kappa$ and $\tau$. Thus, we can consider
the surfaces in $\Nkt$ which are invariant under the action of the one-parameter group of isometries $G_X$, of $\Nkt$, generated by $X$. 
For convenience, we shall introduce cylindrical coordinates  
\begin{equation*}
\left\{
\begin{aligned}
& x=r \cos\theta, \\
& y=r \sin\theta,\\
& z=z,
\end{aligned}
\right.
\end{equation*}
with $r\geq 0$ and $\theta\in(0,2\pi)$. 
In these coordinates the metric \eqref{metrica} becomes 
\begin{equation*}\label{metrica3}
g_{\kappa,\tau} =\frac{dr^{2}}{F^{2}} +r^2\,\Big(\frac{1+\tau^2\, r^2}{F^2}\Big)\,d\theta^2 +dz^2 -2\frac{\tau \,r^2}{F} \,d\theta dz,
\end{equation*}
where $F=1+\dfrac{\kappa}{4}\,r^2$.
Moreover, the  Killing vector field takes the form
$$X=\frac{\partial}{\partial \theta}.$$
The orbit space of the action of $G_X$ can be identified with
\begin{equation}\label{OrbitSpace}
\mathcal{B}:=\Nkt/G_X=\{(r,z)\in \r^2 : r \geq 0\}
\end{equation}
and
the orbital distance metric of $\mathcal{B}$ (see, for example,  \cite{HH1})  is given by 
	\begin{equation*}
	\gt= \frac{dr^2}{F^2}+\frac{dz^2}{1+\tau^2\,r^2}.
	\end{equation*}
Now, consider a surface of revolution $\M$ that, locally, with respect to the cylindrical coordinates, can be parametrized by
\begin{equation}\label{eq-:surf-revolution}
	X(\theta,s)=(r(s),\theta,z(s)),\quad s\in(a,b)\subset\r\,, \theta\in(0,2\pi)
\end{equation}
and suppose that the profile curve $\gamma(s)=(r(s),z(s))$ is parametrized by arc-length in $(\mathcal{B},\gt)$, so that
\begin{equation}\label{ppca}
	\frac{r'^2}{F^2}+\frac{z'^2}{1+\tau^2 \,r^2}=1.
\end{equation}
From \begin{equation*}\label{XpXt_E}
	\left\{
	\begin{aligned}
		&X_{\theta}=\frac{\partial X}{\partial \theta}=-\frac{r\, \sin \theta}{F}E_1+\frac{r \,\cos \theta}{F}E_2-\frac{r^2 \,\tau}{F}E_3,\\
		& X_s=\frac{\partial X}{\partial s}= r'\,\Big(\frac{\cos \theta}{F}E_1+\frac{\sin \theta}{F}E_2 \Big) + z' \,E_3,
	\end{aligned}
	\right.
\end{equation*}
it results that the unit normal vector field of the surface is given by
\begin{equation*}
N=\frac{\big(-F z' \cos\theta - {r r' \tau \sin \theta}\big)}{F\sqrt{1+\tau^2\,r^2}}\,E_1+\frac{\big(-F z'\,\sin \theta+{r r' \tau \cos\theta}\big)}{F\sqrt{1+\tau^2\,r^2}}\,E_2+\frac{r'}{F\sqrt{1+\tau^2\,r^2}}\,E_3.
\end{equation*}
Consequently
\begin{equation}\label{cospsiInv}
	\cos\alpha= g_{k,\tau}( N, E_3 )=\frac{r'}{F\sqrt{1+\tau^2\,r^2}}.
\end{equation}
In order to use the techniques described in Section~\ref{BCV-espacos}, we compute the vector fields $\{T,JT\}$ with respect to the basis $\{X_{\theta},X_s\}$.
As $T$ is the tangent part of $E_3$, it follows that
\begin{equation*}
	\begin{aligned}
		&g_{k,\tau}(T,X_{\theta})=g_{k,\tau}(E_3,X_{\theta})=-\frac{r^2 \,\tau}{F},\\
		&g_{k,\tau}(T,X_s)=g_{k,\tau}(E_3,X_s)=z'.
	\end{aligned}
\end{equation*}
Then, writing 
\begin{equation}\label{Tab}
	T=a \, X_{\theta}+ b\, X_s,
\end{equation}
we obtain the system
\begin{equation*}
	\left\{
	\begin{aligned}
		&g_{k,\tau}(T,X_{\theta})=a \, g_{k,\tau}(X_{\theta},X_{\theta})+b \, g_{k,\tau}(X_{\theta},X_s),\\
		& g_{k,\tau}(T,X_s)= a \, g_{k,\tau}(X_{\theta},X_s)+ b \, g_{k,\tau}(X_s,X_s),
	\end{aligned}
	\right.
\end{equation*}
whose solution, taking \eqref{ppca} into account, is
\begin{equation}\label{abInv}
	a=-\frac{r'^2\, \tau}{F(1+r^2\,\tau^2)}, \qquad
	b=\frac{z'}{(1+r^2\,\tau^2)}.
\end{equation}
Also, in the basis $\{E_1,E_2,E_3\}$, the expression of $T$ is given by
\begin{equation*}
T=\frac{r'(\tau r  r' \sin\theta +F z' \cos\theta )}{F^2(1+r^2\tau^2)}E_1+\frac{r'(-\tau r r' \cos\theta +F z' \sin\theta )}{F^2(1+r^2\tau^2)}E_2
+\frac{\tau^2 r^2 r'^2+F^2z'^2}{F^2(1+r^2\,z^2)}E_3.
\end{equation*}
Therefore, from \eqref{ppca}, it results that
\begin{equation*}
	JT=N\wedge T=\frac{\tau r r' \cos\theta -F z' \sin\theta }{F\sqrt{1+\tau^2r^2}}E_1+\frac{\tau r r' \sin\theta +F z' \cos\theta }{F\sqrt{1+\tau^2r^2}}E_2.
\end{equation*}
With respect to the basis $\{X_{\theta},X_s\}$, we have that
\begin{equation}\label{JTcd}
	JT=c \, X_{\theta}+ d \, X_s,
\end{equation}
where $c$ and $d$ satisfy the system
\begin{equation*}
	\left\{
	\begin{aligned}
		&g_{k,\tau}(JT,X_{\theta})=c\, g_{k,\tau}(X_{\theta},X_{\theta})+d\, g_{k,\tau}(X_s,X_{\theta}),\\
		& g_{k,\tau}(JT,X_s)= c \, g_{k,\tau}(X_{\theta},X_s)+ d\, g_{k,\tau}(X_s,X_s),
	\end{aligned}
	\right.
\end{equation*}
that is
\begin{equation}\label{cdInv}
	c=\frac{F z'}{r\sqrt{1+\tau^2 r^2}}, \qquad d= \frac{\tau\, r}{\sqrt{1+\tau^2 r^2}}.
\end{equation}
The  mean curvature function $f$ of the surface can be computed using standard techniques of equivariant geometry and it is given (see, for example, \cite{CPR}) by 
\begin{equation}\label{fRed}
		\begin{aligned}
			f(s)&=\Big(\frac{1}{r}-\frac{\kappa}{4}\,r \Big)\sin\sigma+\sigma',\\
		\end{aligned}
	\end{equation}
	where $\sigma(s)$ is the angle that  $\gamma$ makes with the ${\partial}/{\partial r}$ direction.

\begin{remark}\label{obs1}
	For later use, we point out that, using \eqref{ppca},  we obtain the following expressions:
\begin{equation}\label{senS}
\cos\sigma=\frac{r'}{F}\,,\quad \sin\sigma=\frac{z'}{\sqrt{1+\tau^2\,r^2}}.
\end{equation}
\end{remark}

We are now in the right position to study when a surface of revolution, locally parametrized by \eqref{eq-:surf-revolution}, is biconservative.
The first step is to write the conditions of Proposition~\ref{pro:biconservative-general} in this context.

\begin{lemma}
Let $\M$ be a surface of revolution in a BCV-space $\Nkt$, whose mean curvature function is $f$. Assume that $\alpha\in(0,\pi)$. Then, $\M$ is biconservative if and only if the following system is satisfied: 
\begin{equation}\label{bicon1}
\left\{
\begin{aligned}
& f'\,\Big[b\,f-2\tau\,d-2\,(\cos\alpha)'\Big]-2f\,(4\tau^2-\kappa)\cos\alpha\,\sin^2\alpha=0,\\
& f'\,(3d\,f-2\tau\,b)=0.
\end{aligned}
\right.
\end{equation}
\end{lemma}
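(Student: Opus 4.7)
The plan is to substitute the surface-of-revolution structure into the general biconservative system from Proposition~\ref{pro:biconservative-general}. The key observation is that $G_X$ acts by isometries on $\M$, so both the mean curvature function $f$ and the angle function $\alpha$ are $G_X$-invariant; hence they depend only on the arc-length parameter $s$ of the profile curve $\gamma$.

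First I would combine \eqref{basee_i} with \eqref{Tab} and \eqref{JTcd} to write
$$
e_1=\frac{aX_{\theta}+bX_s}{\sin\alpha},\qquad e_2=\frac{cX_{\theta}+dX_s}{\sin\alpha},
$$
with $a,b,c,d$ the coefficients given in \eqref{abInv} and \eqref{cdInv}. Since $X_{\theta}=\partial/\partial\theta$ is the Killing generator of $G_X$, it annihilates every $G_X$-invariant function; in particular $X_{\theta}(f)=X_{\theta}(\alpha)=0$, while $X_s(f)=f'$ and $X_s(\alpha)=\alpha'$. Consequently
$$
e_1(f)=\frac{b f'}{\sin\alpha},\quad e_2(f)=\frac{d f'}{\sin\alpha},\quad e_1(\alpha)=\frac{b\alpha'}{\sin\alpha},\quad e_2(\alpha)=\frac{d\alpha'}{\sin\alpha},
$$
and from $f=\lambda+e_1(\alpha)$ we obtain $\lambda=f-b\alpha'/\sin\alpha$.

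Next I would plug these expressions into the biconservative system \eqref{bi-conservative2}, using the identities $\lambda+3e_1(\alpha)=f+2e_1(\alpha)$ and $3\lambda+e_1(\alpha)=3f-2e_1(\alpha)$. The second equation of \eqref{bi-conservative2} simplifies almost immediately: after factoring $f'/\sin\alpha$ the two $\alpha'$-contributions $(2bd\alpha'/\sin\alpha)$ cancel identically, leaving precisely $f'(3df-2\tau b)=0$, which is the second equation of \eqref{bicon1}. The first equation of \eqref{bi-conservative2} leads, after the same substitutions and collection, to a single residual $\alpha'$-term proportional to $(b^2+d^2)\alpha'/\sin\alpha$.

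The main obstacle, and the only step that is not purely algebraic, is to identify this residual term with $-2(\cos\alpha)'$; this reduces to proving the identity $b^2+d^2=\sin^2\alpha$. I would verify it directly from the explicit formulas \eqref{abInv} and \eqref{cdInv}, together with \eqref{cospsiInv} and the arc-length condition \eqref{ppca}: both sides reduce to $\bigl(z'^2+\tau^2r^2(1+\tau^2r^2)\bigr)/(1+\tau^2r^2)^2$. With this identity in hand, the relation $(\cos\alpha)'=-\sin\alpha\,\alpha'$ rewrites the residual term as $-2(\cos\alpha)'$, and multiplying the whole equation by $\sin\alpha$ yields exactly the first line of \eqref{bicon1}. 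Since each substitution is reversible, this proves the stated equivalence.
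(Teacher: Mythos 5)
Your proposal is correct and follows essentially the same route as the paper: both substitute the rotationally invariant data into the system \eqref{bi-conservative2} of Proposition~\ref{pro:biconservative-general}, use that derivatives along $T=aX_\theta+bX_s$ and $JT=cX_\theta+dX_s$ of the invariant functions $f$ and $\alpha$ reduce to $b\,(\cdot)'$ and $d\,(\cdot)'$, and invoke the identity $b^2+d^2=\sin^2\alpha$ to produce the term $-2(\cos\alpha)'$. The only difference is cosmetic (you work with $\alpha'$ and skip the paper's intermediate system \eqref{bi-conservativeInv}, and you verify $b^2+d^2=\sin^2\alpha$ explicitly, which the paper merely states), so no further changes are needed.
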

\begin{proof}
First, taking into account \eqref{Tab} and \eqref{JTcd},  \eqref{bi-conservative2}
can be written as
\begin{equation}\label{bi-conservativeInv}
		\left\{
		\begin{aligned}
			& f'\Big[-2b\, T(\cos\alpha)-2d\Big(JT(\cos\alpha)
			+\tau \sin^2\alpha\Big)+b f\sin^2\alpha\Big]
			-2f(4\tau^2-\kappa)\cos\alpha\sin^4\alpha=0,\\
			&\\
			& f'\Big[-2b\,\Big(JT(\cos\alpha)
			+\tau \sin^2\alpha\Big)+(2d\,\lambda+d\, f)\sin^2\alpha\Big]=0.
		\end{aligned}
		\right.
	\end{equation}
Next, using the expressions
\begin{equation*}\label{TcospsiInv}
	\begin{aligned}
		T(\cos\alpha)=b\, \Big(\frac{r'}{F\sqrt{1+\tau^2r^2}}\Big)',
	\end{aligned}
\end{equation*}
\begin{equation*}\label{JTcospsiInv}
	\begin{aligned}
		JT(\cos\alpha)
		=d\, \Big(\frac{r'}{F\sqrt{1+\tau^2r^2}}\Big)',
	\end{aligned}
\end{equation*}
and that $b^2+d^2=\sin^2\alpha$, system \eqref{bi-conservativeInv} becomes \eqref{bicon1}.
\end{proof}

\begin{theorem}\label{teo:surf-rev-tau-not-zero}
Let $\M$ be a surface of revolution in a BCV-space  $\Nkt$, that is not a space form and with $\tau\neq0$. Assume that $f\neq 0$ at every point on $\M$ and $\alpha\in(0,\pi)$. Then, $\M$ is a  biconservative surface if and only if it is a Hopf circular cylinder.  
\end{theorem}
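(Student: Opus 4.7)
The plan is to prove both implications separately; the $(\Leftarrow)$ direction is a direct verification, while the $(\Rightarrow)$ direction splits naturally according to the dichotomy in the second equation of \eqref{bicon1}. For the backward implication, observe that a Hopf circular cylinder has constant radial coordinate $r(s)\equiv r_0$ along its profile curve, so by Remark~\ref{obs1} we have $\cos\sigma=r'/F=0$ and $\sin\sigma=\pm 1$; formula \eqref{fRed} then yields $f=\pm(1/r_0-\kappa r_0/4)$, a constant. From \eqref{cospsiInv} one also has $\cos\alpha=0$. Substituting $f'=0$ and $\cos\alpha=0$ into \eqref{bicon1} shows that both biconservativity equations are trivially satisfied.

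For the forward direction, the second equation of \eqref{bicon1} forces either $f'=0$ or $3df-2\tau b=0$ pointwise. On any open interval where $f'\equiv 0$, the surface is a non-minimal CMC surface, so Theorem~\ref{teoACHopf} applies and $\M$ is a Hopf tube over a curve $\gamma_0\subset M^2(\kappa)$ of constant geodesic curvature. The $\mathrm{SO}(2)$-invariance of $\M$ descends through the Hopf submersion $\psi$ to $\mathrm{SO}(2)$-invariance of $\gamma_0$ in $M^2(\kappa)$; the only such curves are geodesic circles centered at the origin, so $r(s)\equiv r_0$ and $\M$ is a Hopf circular cylinder.

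The remaining case is the main obstacle: suppose $f'\neq 0$ on an open interval. Then $3df=2\tau b$ on that interval, and inserting the explicit expressions for $b$ and $d$ from \eqref{abInv}, \eqref{cdInv} and using \eqref{senS} simplifies this to the algebraic identity $3rf=2\sin\sigma$. My plan is to combine this with \eqref{fRed} to extract $\sigma'=f(3\kappa r^2-4)/8$ and then substitute everything (together with $(\cos\alpha)'$ computed from \eqref{cospsiInv}) back into the first equation of \eqref{bicon1}. The expectation is that the resulting identity, under the standing hypotheses $\tau\neq 0$, $\kappa\neq 4\tau^2$, $f\neq 0$ and $\alpha\in(0,\pi)$, will force $\cos\alpha=0$, whence $r'=0$; but then $3rf=2\sin\sigma=\pm 2$ with $r$ constant makes $f$ constant too, contradicting $f'\neq 0$. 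The hard part is the algebraic simplification just described; it is driven by the fact that $\kappa-4\tau^2$ is the only obstruction appearing in the Ricci term of the first equation, so once $3df=2\tau b$ is used to eliminate $b$, the term $(4\tau^2-\kappa)\cos\alpha\sin^2\alpha$ cannot be killed except by $\cos\alpha=0$.
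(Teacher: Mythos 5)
Your plan is essentially the paper's own proof: the same dichotomy from the second equation of \eqref{bicon1}, the same reduction of $3df=2\tau b$ to $3rf=2\sin\sigma$, the same extraction of $\sigma'$ (your $\sigma'=f(3\kappa r^2-4)/8$ is exactly the paper's $\sigma'=\sin\sigma\left(\tfrac{\kappa r}{4}-\tfrac{1}{3r}\right)$ after substituting $\sin\sigma=3rf/2$), and the same intended contradiction. The converse verification and the descent of the $\mathrm{SO}(2)$-invariance through $\psi$ in the CMC case are fine (the paper handles the CMC case the same way, via Theorem~\ref{teoACHopf}).

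The one thing you have not actually done is the decisive step: substituting $f$, $f'$, $(\cos\alpha)'$, $b$, $d$ into the first equation of \eqref{bicon1} and checking that it really forces $\cos\alpha=0$. You state this as an ``expectation,'' and your heuristic for it is not quite right: after the substitution the \emph{entire} expression, not just the Ricci term, turns out to be proportional to $\kappa-4\tau^2$, so there is genuine cancellation that has to be verified rather than guessed. The paper carries this out and obtains $(\kappa-4\tau^2)\,f\,(\cos 2\sigma-1-2\tau^2r^2)\cos\sigma=0$; since $f\neq0$ forces $\sin\sigma\neq0$ by \eqref{fsen}, the middle factor equals $-2\sin^2\sigma-2\tau^2r^2<0$, so indeed $\cos\sigma=0$, which via $\cos\alpha=\cos\sigma/\sqrt{1+\tau^2r^2}$ (equivalently $r'=F\cos\sigma=0$) gives $r$ constant and hence $f$ constant, the contradiction you anticipated. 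So the architecture of your argument is correct and matches the paper, but as written the proof is incomplete until this computation (including the sign check on the residual factor) is performed.
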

\begin{proof}
The second equation of system~\eqref{bicon1} occurs if and only if either $f$ is constant or 
\begin{equation}\label{segunda}
	3d\,f-2\tau\,b=0.
\end{equation}
If $f$ is constant, from Theorem~\ref{teoACHopf}, $\M$ is a Hopf tube over a curve with  constant geodesic curvature and, since $\M$ is a surface of revolution, we conclude that it is a  Hopf circular cylinder.
  
Next, assume that $f'\neq 0$ everywhere. Replacing the expression of $b$ and $d$, given in \eqref{abInv} and \eqref{cdInv} respectively, in \eqref{segunda},  we obtain the condition
\begin{equation}\label{biconserve1Inv}
	\tau \,(3f \,r \sqrt{1 + \tau^2 \,r^2} - 2 z')=0.
\end{equation}
Then, since $\tau\neq 0$, 
\begin{equation}\label{fBiconserv}
		f=\frac{2 z'}{3 r \,\sqrt{1 + \tau^2 \,r^2}};
\end{equation}
Now, using \eqref{senS} we can  rewrite \eqref{fBiconserv} as
		\begin{equation}\label{fsen}
			f(s)= \frac{2 \sin\sigma}{3 r}.
		\end{equation}
As $f\neq 0$, we have that $\sin\sigma \neq 0$. Then, comparing \eqref{fsen} with \eqref{fRed} we get
		\begin{equation}\label{Sigma'}
			{\sigma'}= {\sin\sigma}\Big(\frac{\kappa\, r}{4}-\frac{1}{3 r} \Big).
		\end{equation}
From \eqref{cospsiInv}, \eqref{senS} and \eqref{abInv}, we can write
\begin{equation}\label{eq:cos-r'-b}
\cos\alpha = \frac{\cos\sigma}{\sqrt{1 + \tau^2 \,r^2}}\,,\quad r'= F \cos\sigma\,,\quad b=\frac{\cos\sigma}{\sqrt{1 + \tau^2 \,r^2}}\,,\quad
\end{equation}
Next, taking the derivative of $f$ and $\cos\alpha$ with respect to $s$, and replacing the value of $\sigma'$ and $r'$ given in \eqref{Sigma'} and \eqref{eq:cos-r'-b}, we obtain
\begin{equation}\label{eq:cosa1-f1}
f' = \frac{\sin2\sigma}{3\,r}\Big(\frac{\kappa\, r}{4}-\frac{4}{3 r} \Big)\,,\quad (\cos\alpha)'=-\frac{\sin^2\sigma}{\sqrt{1 + \tau^2 \,r^2}}\Big(\frac{\kappa\, r}{4}-\frac{1}{3 r} \Big)-\frac{\tau^2\, r\,(4+\kappa\, r^2)\,\cos^2\sigma}{4(1 + \tau^2 \,r^2)^{3/2}} .
\end{equation}
Finally, replacing \eqref{fsen}, \eqref{eq:cos-r'-b} and  \eqref{eq:cosa1-f1} in  the first equation of \eqref{bicon1}, we obtain the condition
$$
(\kappa-4 \tau ^2)\,f\,(\cos2\sigma-1-2 \tau ^2 r^2)\, \cos\sigma=0.
$$
Therefore, as $\kappa\neq 4\tau^2$, we conclude that $\cos \sigma=0$ which implies, using \eqref{eq:cos-r'-b}, that $r={\rm constant}$. Therefore, from \eqref{fsen}, $f$ must be constant which contradicts the hypothesis that $f'\neq 0$ everywhere.
\end{proof}

\end{document}